\documentclass[10pt]{article}

\usepackage{amsmath}
\usepackage{amssymb}
\usepackage{moreverb}

\usepackage[all]{xy} 



\catcode`\à= \active \def à{\`a}
\catcode`\À= \active \def À{\`A}
\catcode`\é= \active \def é{\'e}
\catcode`\É= \active \def É{\'E}
\catcode`\è= \active \def è{\`e}
\catcode`\È= \active \def È{\`E}
\catcode`\ù= \active \def ù{\`u}
\catcode`\â= \active \def â{\^a}
\catcode`\Â= \active \def Â{\^A}
\catcode`\ê= \active \def ê{\^e}
\catcode`\Ê= \active \def Ê{\^E}
\catcode`\î= \active \def î{\^\i}
\catcode`\ô= \active \def ô{\^o}
\catcode`\û= \active \def û{\^u}
\catcode`\ä= \active \def ä{\"a}
\catcode`\ë= \active \def ë{\"e}
\catcode`\ï= \active \def ï{\"\i}
\catcode`\ö= \active \def ö{\"o}
\catcode`\ü= \active \def ü{\"u}
\catcode`\ç= \active \def ç{\c{c}}
\catcode`\Ç= \active \def Ç{\c{C}}


\newcounter{remark}
\newcommand{\remark}{\addtocounter{remark}{1}
                       \par \quad {\bf \arabic{remark}}.\,
                      }

\newenvironment{rk}{\begin{quote}
                     \normalfont\footnotesize {{\bf Remark} --}
                    }{\end{quote}}

\newenvironment{proof}{\medbreak \noindent {\bf Proof~---}}
                       {\hfill $\square$ \medbreak}





\newcommand{\Cl}{\operatorname{\mathcal{C}\ell}}

\newcommand{\dis}{\operatorname{dis}}
\newcommand{\Div}{\operatorname{Div}}

\newcommand{\Gal}{\operatorname{Gal}}

\newcommand{\Id}{\operatorname{Id}}

\newcommand{\Ker}{\operatorname{Ker}}


\newcommand{\id}{\mathfrak}
\newcommand{\ideng}[1]{\left\langle #1 \right\rangle}

\newcommand{\NN}{\mathbb N}
\newcommand{\ZZ}{\mathbb Z}





\newtheorem{theo}{Theorem}

\newtheorem{cor}[theo]{Corollary}
\newtheorem{lem}[theo]{Lemma}

\newtheorem{prop}[theo]{Proposition}

\setlength{\topmargin}{-0,7cm}
\setlength{\headheight}{0cm}
\setlength{\headsep}{0cm}
\setlength{\topskip}{0cm}
\setlength{\textheight}{25cm}
\setlength{\footskip}{1cm}

\setlength{\oddsidemargin}{-0,7cm}
\setlength{\marginparwidth}{0cm}
\setlength{\marginparsep}{0cm}
\setlength{\textwidth}{17,3cm}

\bibliographystyle{alpha}

\title{\bfseries On the kernel of the norm in some unramified number fields extensions}

\author{
Emmanuel {\sc Hallouin}
and Marc {\sc Perret}\thanks{Laboratoire Emile Piacrd, Institut de Mathématiques de Toulouse, France.}
}

\begin{document}
\maketitle

\section*{Introduction}

Let~$L/K$ be a unramified Galois extension of number fields whose Galois group~$G$ is a finite
$p$-group ($p$ a prime integer). In~\cite{Serre_Galois_Cohomology}, Chap~I, \S 4.4, it is
proved that if~$L$ is principal then:
\begin{align} \label{r-d}
d_p H^3(G,  \ZZ) = d_p H^2(G,\ZZ/p\ZZ) - d_p H^1(G, \ZZ/p\ZZ) \leq r_1 + r_2
\end{align}
where~$d_p G$ denotes the $p$-rank of a finite $p$-group~$G$ and where~$(r_1, r_2)$ is the
signature of the number field~$K$. Briefly, the proof works as follows.
Let~$C_L$ be the id\`eles class group of~$L$ and~$E_L$ its unit group, then:
$$
\forall q \in \ZZ,
\qquad
H^q(G, C_L) \simeq H^{q+1}(G, E_L)
\quad\text{and}\quad
H^q(G, C_L) \simeq H^{q-2}(G, \ZZ).
$$
The first isomorphism follows from the fact that~$L$ is principal while the second one
is part of class field theory. Thus:
\begin{equation} \label{isom}
H^{q+1}(G, E_L) \simeq H^{q-2}(G, \ZZ).
\end{equation}
The inequality~(\ref{r-d}) comes from the specialization at~$q=-1$ of this isomorphism because
the rank of~$H^0(G, E_L)$ is easily bounded thanks to Dirichlet's units theorem.

Together with Golod-Safarevich's group theoretic result,~(\ref{r-d})  implies that if a
number field~$K$ satisfies the quadratic (in~$d_p \Cl(K)$) inequality:
$$
d_p \Cl(K)^2 - d_p \Cl(K) > r_1 + r_2 - 1
$$
then its $p$-class field tower is infinite.

In order to find a cubic (in~$d_p \Cl(K)$) analogue of this criteria,
we specializes the isomorphism~(\ref{isom}) at~$q = -2$. This yields the following equality:
$$
d_p H^{-1}(G, E_L) = d_p H^3(G,\ZZ/p\ZZ) - d_p H^2(G,\ZZ/p\ZZ) + d_p H^1(G,\ZZ/p\ZZ).
$$
It is so crucial to find an upperbound for the $p$-rank~$d_p H^{-1}(G, E_L)$
when~$\Cl(L)$ is trivial. In this paper, we prove results about this rank in some
special cases. More precisely, we compute this $p$-rank when~$L/K$ is an abelian unramified
(also at infinity) $p$-extension
whose Galois group can be generated by two elements. We also exhibit an explicit basis of
the $p$-group~$H^{-1}(G, E_L)$.

\medbreak

{\bfseries Notations ---}
Let~$K$ be a number field. We denote by~$\Sigma_K$ the set finite
places,~$\Div(K)$ its divisor group and~$\Cl(K)$ its divisor class group.
To each finite place~$v \in \Sigma_K$ one can associate a unique
prime ideal~$\id{p}_v$ of~$K$ and to each~$x \in K$, there corresponds
a principal divisor~$\ideng{x}_K$ of~$K$. 

If~$L/K$ is a Galois extension of number fields, then for
each~$v \in \Sigma_K$,~$\Sigma_{L,v}$ denotes the subset
of places~$w \in \Sigma_L$ above~$v$ (for short~$w \mid v$) and~$f_v$
the residual degree of any~$w \in \Sigma_{L,v}$ over~$K$. The
map~$e_{L/K} : \Div(K) \to \Div(L)$ is the classical extension of ideals.

Let~$G$ be a finite group and~$M$ be a $G$-module. The norm map~$N_G : M \to M$
is defined by~$x \mapsto \prod_{g \in G} g(x)$; its kernel is denoted
by~$M[N_G]$. The augmentation
ideal~$I_G M = \ideng{\frac{g(x)}{x}, x \in M, g \in G}$
is of importance. Of course, one has~$I_G M \subset M[N_G]$; the quotient
of these two subgroups is nothing else that the Tate cohomology group:
$$
H^{-1}(G, M) \overset{\text{def.}}{=} \frac{M[N_G]}{I_G M}
$$
in which we are interested (see~\cite{Serre_CorpsLocaux} for an introduction
to the negative cohomology groups).

\section{The cyclic case} \label{s_cyclic}

Let~$L/K$ be a cyclic extension with Galois group~$G = \ideng{g}$.  A classical consequence of
the Hilbert's~90 theorem states that the kernel of the norm~$N_G$ equals the augmentation
ideal:~$L^*[N_G] = I_G L^*$. In cohomological terms, this means that:
$$
H^1(G, L^*) = \{1\}
\qquad \Longrightarrow \qquad
H^{-1}(G, L^*) = \{1\}.
$$
Another easy consequence already known is that:

\begin{prop}
Let~$L/K$ be a cyclic unramified extension with Galois group~$G = \ideng{g}$. Then the map:
$$
\begin{array}{rccc}
\varphi_g : & \Ker(\Cl(K) \to \Cl(L)) & \longrightarrow & H^{-1}(G, E_L) \\
            &     [I]                & \longmapsto      & \frac{g(y)}{y}
\end{array},
$$
where~$[I]$ denotes the ideal class of~$I$ and~$y$ any generator of~$I$ in~$L$, is an
isomorphism of groups.
\end{prop}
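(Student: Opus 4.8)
The plan is to establish the four standard properties of $\varphi_g$ in turn: that it is well defined (the value $[g(y)/y]$ lands in $H^{-1}(G,E_L)$ and is independent of the choices made), that it is a group homomorphism, that it is injective, and that it is surjective. The two inputs I expect to carry the argument are the homological form of Hilbert~90 recalled above, $L^*[N_G] = I_G L^*$, and the description of $G$-invariant divisors forced by the \emph{unramified} hypothesis.

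First I would check well-definedness. Given $[I]$ in the kernel, I pick a representative $I$ and $y \in L^*$ with $e_{L/K}(I) = \ideng{y}_L$. Since $e_{L/K}(I)$ is extended from $K$ it is $G$-invariant, so $\ideng{g(y)}_L = g\ideng{y}_L = \ideng{y}_L$, which shows $g(y)/y$ is a unit; and because $\{hg : h \in G\} = G$ one gets $N_G(g(y)/y) = N_G(y)/N_G(y) = 1$, so $g(y)/y \in E_L[N_G]$ and $\varphi_g([I])$ is a well-defined class in $H^{-1}(G,E_L)$. I then verify independence of the choices: replacing $y$ by $uy$ with $u \in E_L$ multiplies $g(y)/y$ by $g(u)/u \in I_G E_L$, while replacing $I$ by $I\ideng{a}_K$ with $a \in K^*$ multiplies $y$ by $a$ and hence $g(y)/y$ by $g(a)/a = 1$. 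Both alterations leave the class in $H^{-1}(G,E_L)$ untouched. Multiplicativity is then immediate, since generators multiply: if $y,z$ generate $e_{L/K}(I)$ and $e_{L/K}(J)$, then $yz$ generates $e_{L/K}(IJ)$ and $g(yz)/(yz) = (g(y)/y)(g(z)/z)$.

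For injectivity I would assume $\varphi_g([I]) = 1$, i.e. $g(y)/y \in I_G E_L$. Because $G$ is cyclic its augmentation ideal is principal, generated by $g-1$, so $I_G E_L = \{\,g(u)/u : u \in E_L\,\}$; thus $g(y)/y = g(u)/u$ for some unit $u$, forcing $y/u$ to be fixed by $g$ and hence to lie in $K^*$, say $y/u = a$. Then $e_{L/K}(I) = \ideng{y}_L = \ideng{a}_L = e_{L/K}(\ideng{a}_K)$, and since distinct primes of $K$ extend to divisors with disjoint supports the map $e_{L/K}$ is injective, giving $I = \ideng{a}_K$ and $[I] = 0$.

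The surjectivity step is the one I expect to be the crux, and it is where both hypotheses are used. Starting from a class represented by $\xi \in E_L[N_G] \subset L^*[N_G]$, Hilbert~90 yields $\xi \in I_G L^* = \{\,g(y)/y : y \in L^*\,\}$, so $\xi = g(y)/y$ for some $y \in L^*$. Then $\ideng{y}_L$ is $G$-invariant, because $g\ideng{y}_L = \ideng{\xi y}_L = \ideng{y}_L$ as $\xi$ is a unit. Here the unramified hypothesis is essential: for each finite place $v$ the group $G$ permutes $\Sigma_{L,v}$ transitively, so an invariant divisor is constant along each fibre, and since $e_{L/K}(\id{p}_v) = \sum_{w \mid v} w$ (all ramification indices equal $1$) the map $e_{L/K}$ identifies $\Div(K)$ with the invariant subgroup $\Div(L)^G$. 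Hence $\ideng{y}_L = e_{L/K}(I)$ for a unique $I \in \Div(K)$; this $I$ becomes principal in $L$, so $[I]$ lies in the kernel, and $\varphi_g([I]) = [g(y)/y] = [\xi]$, proving surjectivity and finishing the argument.
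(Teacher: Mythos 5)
Your proof is correct and the crux --- surjectivity via Hilbert~90 ($\xi = g(y)/y$ for some $y \in L^*$, then $G$-invariance of $\ideng{y}_L$ plus the unramified hypothesis to descend it to an ideal $I$ of $K$) --- is exactly the argument the paper gives. The only difference is that you also write out the routine checks (well-definedness, multiplicativity, injectivity), which the paper dismisses as non-trivial only for surjectivity; your versions of these are fine.
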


\begin{proof}
The only non-trivial assertion to verify is the surjectivity of the map. Let~$u \in E_N[N_G]$, then
there exists~$y \in L^*$ such that~$u = \frac{g(y)}{y}$. Thus
the ideal~$\ideng{y}_L$ is fixed by the action of~$G$. The extension~$L/K$ being unramified, the
ideal~$\ideng{y}_L$ is the extension to~$L$ of an ideal~$I$
of~$K$:~$e_{L/K}(I) = \ideng{y}_L$.
Then~$u = \varphi_g([I])$.
\end{proof}

This proposition implies the following corollary:

\begin{cor} \label{H-1_unites_cyclic}
Let~$K$ be a number field and~$L/K$ an unramified (included at infinity) abelian extension
with Galois group~$G$ a cyclic $p$-group such that~$L$ is principal.
If~$G = \ideng{g}$ and if~$\pi$ generate a prime ideal
of~$L$ with Frobenius equal to~$g$, then:
$$
H^{-1}(G, E_L)
=
\ideng{\frac{g(\pi)}{\pi}}.
$$
\end{cor}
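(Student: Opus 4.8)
The plan is to deduce everything from the Proposition above, the principality of~$L$, and Artin reciprocity. Since~$L$ is principal, the group~$\Cl(L)$ is trivial, so the extension map~$\Cl(K) \to \Cl(L)$ is the zero map and~$\Ker(\Cl(K) \to \Cl(L)) = \Cl(K)$. The Proposition then says that~$\varphi_g$ is an isomorphism from~$\Cl(K)$ onto~$H^{-1}(G, E_L)$. Hence it suffices to exhibit a generator of~$\Cl(K)$ and to check that its image under~$\varphi_g$ equals~$\frac{g(\pi)}{\pi}$: the target being the~$\varphi_g$-image of a cyclic generator, it will then be generated by that single element.

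Next I would identify the ideal class that~$\varphi_g$ sends to~$\frac{g(\pi)}{\pi}$. Write~$\id{P} = \ideng{\pi}_L$ for the prime ideal generated by~$\pi$ and~$\id{p} = \id{P} \cap \OO_K$ for the prime of~$K$ below it. Because~$L/K$ is unramified, the decomposition group of~$\id{P}$ is cyclic, generated by its Frobenius, which by hypothesis is~$g$; since~$G = \ideng{g}$, this decomposition group is all of~$G$, that is,~$\id{p}$ is inert and~$e_{L/K}(\id{p}) = \id{P} = \ideng{\pi}_L$. Thus~$\pi$ is a generator of~$e_{L/K}(\id{p})$ in~$L$, and by the very definition of~$\varphi_g$ we obtain~$\varphi_g([\id{p}]) = \frac{g(\pi)}{\pi}$.

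It then remains to show that~$[\id{p}]$ generates~$\Cl(K)$. This is where class field theory enters: for the unramified abelian extension~$L/K$ the Artin map furnishes a surjection~$\Cl(K) \twoheadrightarrow G$ whose kernel is the norm subgroup~$N_{L/K}\Cl(L)$. As~$\Cl(L)$ is trivial this kernel vanishes, so the Artin map is an isomorphism~$\Cl(K) \xrightarrow{\sim} G$. Under it~$[\id{p}]$ is sent to the Frobenius~$g$ of~$\id{p}$, a generator of~$G$; being the preimage of a generator under an isomorphism,~$[\id{p}]$ generates~$\Cl(K)$. Combining this with the first paragraph,~$\frac{g(\pi)}{\pi} = \varphi_g([\id{p}])$ generates~$H^{-1}(G, E_L)$, which is exactly the claim.

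The only genuinely non-formal ingredient, and the step I would be most careful about, is the class field theory input identifying the kernel of the Artin map with~$N_{L/K}\Cl(L)$, equivalently the fact that the principality of~$L$ forces~$\Cl(K) \simeq G$; everything else is bookkeeping with the Proposition together with the elementary splitting behaviour of~$\id{p}$.
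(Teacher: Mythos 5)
Your proof is correct and follows exactly the route the paper intends (the paper merely says the corollary "is implied by" the proposition and leaves the deduction implicit): you use principality of~$L$ to identify the capitulation kernel with~$\Cl(K)$, the Artin isomorphism~$\Cl(K) \simeq G$ to see that~$[\id{p}]$ generates, and the inertness of~$\id{p}$ to compute~$\varphi_g([\id{p}]) = g(\pi)/\pi$. No gaps; this is the standard filling-in of the paper's one-line argument.
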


\section{Some experiments with {\tt magma}}

With the help of {\tt magma} and {\tt pari/gp}, we have made some experiments and collect
informations about
the $2$-rank of the group~$H^{-1}(G, E_{K^i})$ in unramified finite $2$-extensions~$K^i/K$
($i = 1,2$).
In each case, we start with a quadratic complex number field~$K$ whose class group is a $2$-group;
tables of such fields can be found in~\cite{Lemmermeyer}.
We compute~$K^1 = K^{\rm hilb.}$ and the group structure
of~$H^{-1}(E_{K^1}) \overset{\rm def.}{=} H^{-1}(\Gal(K^1/K), E_{K^1})$. 
If~$\Cl(K^1)$ is not trivial, we try to go further. We compute~$K^2 = (K^1)^{hilb.}$
and the group structure of~$H^{-1}(E_{K^2}) \overset{\rm def.}{=} H^{-1}(\Gal(K^2/K), E_{K^2})$.

Here is our {\tt magma} program we used:
\begin{center}
\begin{minipage}{16cm}
{\small
\begin{boxedverbatim}
clear ;
Q := RationalField() ;
dis := -84 ;
K<x> := QuadraticField(dis) ;

"Computation of K^hilb..." ;
Khilb := AbsoluteField(HilbertClassField(K)) ;
Khilb<y> := OptimizedRepresentation(Khilb) ;

"... compuation of the unit group of K^hilb..." ;
E_Khilb, e_Khilb := UnitGroup(Khilb) ;

Gal_Khilb_Q, Aut_Khilb_Q, i := AutomorphismGroup(Khilb) ;
G := FixedGroup(Khilb, K) ;
Norm_G := map < Khilb -> Khilb | y :-> &* [i(g)(y) : g in G] > ;
N := hom < E_Khilb -> E_Khilb | [(e_Khilb * Norm_G * Inverse(e_Khilb))(E_Khilb.i) :
                                          i in [1..NumberOfGenerators(E_Khilb)]] > ;

Ker_N := Kernel(N) ;
I_G := [i(g)(u)/u : u in Generators(E_Khilb) @ e_Khilb, g in G] ;
I_G := sub < E_Khilb | I_G @@ e_Khilb > ;
assert(I_G subset Ker_N) ;
printf "... structure of H^(-1)(G, E_M) = 
\end{boxedverbatim}
}
\end{minipage}
\end{center}

Unfortunately, because of the difficulty of computing the unit group of a number field, 
only few computations achieved. In the following table, the notation~$2 \cdot 4$ means that
the concerning group is isomorphic to~$\ZZ/2\ZZ \times \ZZ/4\ZZ$.
$$
\begin{array}{c|c||c|c||c|c}
\dis(K) & \Cl(K) & \Cl(K^1) & H^{-1}(E_{K^1}) & \Cl(K^2) & H^{-1}(E_{K^2}) \\
\hline
-84 & 2 \cdot 2 & 1 & 2 \cdot 2 \cdot 2 & & \\
\hline
-120 & 2 \cdot 2 & 2 & 4 & 1 & 8 \\
\hline
-260 & 2 \cdot 4 & 2 & 2 \cdot 4 & 1 & 2 \cdot 8 \\
\hline
-280 & 2 \cdot 2 & 4 & 4         & 1 & 16 \\ 
\hline
-308 & 2 \cdot 4 & 1 & 2 \cdot 2 \cdot 4 & &  \\
\hline
-399 & 2 \cdot 8 & 1 & 2 \cdot 2 \cdot 8 & & \\
\hline
-408 & 2 \cdot 2 & 2 & 2 \cdot 2 \cdot 2 & 1& 2 \cdot 2 \cdot 4 \\
\hline
-420 & 2 \cdot 2 \cdot 2 & 2 \cdot 2 & 2 \cdot 2 \cdot 2 \cdot 4 & 1 & \text{unkown} \\
\end{array}
$$

In the following section, we will explain why~$d_2 H^{-1}(E_{K^1}) = 3$
when~$d_2 \Cl(K) = 2$ and~$d_2 \Cl(K^1) = 1$. In all the remaining known cases, we point out
that~$d_2 H^{-1}(E_{K^1}) = d_2 H^{-1}(E_{K^2})$.

\section{When the Galois group has two generators}

The goal of is section is to extend the results of \S\ref{s_cyclic} to the case of extensions
whose Galois group is an abelian group generated by two elements. 

First, we investigate the cohomology group with values in~$M^*$. We still have: 

\begin{theo} \label{H-1_M_d_p_egal_2}
Let~$K$ be a number field and~$M/K$ be an unramified (included at infinity) extension
whose Galois group~$G$ is an abelian $p$-group generated by two elements.
Then~$H^{-1}(G, M^*) = 1$.
\end{theo}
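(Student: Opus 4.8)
The plan is to reduce everything to the cyclic situation of \S\ref{s_cyclic} by cutting $G$ with an intermediate field. Write $G=\langle\sigma\rangle\times\langle\tau\rangle$ and set $F=M^{\langle\tau\rangle}$, so that $F/K$ is cyclic with group $\langle\sigma\rangle$, $M/F$ is cyclic with group $\langle\tau\rangle$, both are unramified everywhere (including at infinity), and the norm factors as $N_G=N_{F/K}\circ N_{M/F}$. Given $x\in M^*[N_G]$, put $y=N_{M/F}(x)\in F^*$; then $N_{F/K}(y)=N_G(x)=1$, so Hilbert~90 for the cyclic extension $F/K$ produces $w\in F^*$ with $y=\sigma(w)/w$. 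The decisive reduction is the claim that it suffices to write $w=c\cdot N_{M/F}(v)$ with $c\in K^*$ and $v\in M^*$, i.e. that $w\in K^*\cdot N_{M/F}(M^*)$. Indeed, setting $x'=x\cdot\bigl(v/\sigma(v)\bigr)$ one computes $N_{M/F}(x')=y\cdot\sigma(N_{M/F}(v))^{-1}N_{M/F}(v)=y\cdot\sigma(w)^{-1}w=1$, so $x'\in M^*[N_\tau]=I_{\langle\tau\rangle}M^*$ by Hilbert~90 applied to $M/F$; since $\sigma(v)/v\in I_{\langle\sigma\rangle}M^*$, this gives $x=x'\cdot\bigl(\sigma(v)/v\bigr)\in I_{\langle\sigma\rangle}M^*\cdot I_{\langle\tau\rangle}M^*=I_GM^*$, which is exactly what must be proved.

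It remains to establish $w\in K^*\cdot N_{M/F}(M^*)$, and this is where the unramifiedness hypothesis is used. Since $M/F$ is cyclic, the Hasse norm theorem says an element of $F^*$ is a global norm iff it is a local norm at every place. At the archimedean places $M/F$ is trivial (unramified at infinity), so the condition is vacuous there; at each finite place the local extension is unramified, hence the norm is surjective on units and an element is a local norm precisely when its valuation is divisible by the local residual degree $f_{v'}$ of $M/F$. Thus I must choose $c\in K^*$ so that $\operatorname{ord}_{v'}(wc)\equiv 0\pmod{f_{v'}}$ for every finite place $v'$ of $F$. The key compatibility is that $y=N_{M/F}(x)$ satisfies $\operatorname{ord}_{v'}(y)=f_{v'}\sum_{w\mid v'}\operatorname{ord}_w(x)\equiv 0\pmod{f_{v'}}$, so that along each $\langle\sigma\rangle$-orbit of places above a fixed $v\in\Sigma_K$ the valuations of $w$ are congruent modulo $f_v$; since $\langle\sigma\rangle=\operatorname{Gal}(F/K)$ acts transitively on the places above $v$, the required congruence class of $\operatorname{ord}_v(c)$ is well defined and nonzero for only finitely many $v$.

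The main obstacle is therefore the final, genuinely global step: producing an element $c\in K^*$ realizing these prescribed valuation classes. This amounts to showing that a certain ideal class of $K$ lies in the subgroup of $\Cl(K)$ generated by the classes $[\mathfrak p_v^{f_v}]$, and it is precisely here that global class field theory must enter, via Artin reciprocity and the fact that $M/K$ unramified everywhere (including at infinity) places $M$ inside the Hilbert class field of $K$, so that $[\mathfrak p_v]$ is controlled by the Frobenius $\operatorname{Fr}$ of $v$ in $G$. I expect this class-group computation to carry the real content of the proof, the preceding reductions being formal consequences of Hilbert~90 and the transitivity of the norm. As an independent check one may instead run the idèle-class exact sequence $1\to M^*\to J_M\to C_M\to 1$: unramifiedness makes every decomposition group $G_w$ cyclic, whence $H^{-1}(G,J_M)=\bigoplus_v H^{-1}(G_w,M_w^*)=1$ by local Hilbert~90, and the vanishing of $H^{-1}(G,M^*)$ then follows once the connecting map out of $H^{-2}(G,C_M)\simeq H^{-2}(G,\ZZ)$-shifted cohomology is shown to vanish, which translates into the same arithmetic input supplied above.
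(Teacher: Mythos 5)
Your reduction is sound, and it mirrors the paper's own strategy more closely than you may suspect: split off a cyclic sub\-extension, apply Hilbert~90 to the cyclic quotient $F/K$, translate ``being a norm from $M/F$'' into congruences on valuations modulo the residual degrees $f_{v'}$ (using that the finite places are unramified and the infinite ones are split), check consistency of these congruences along the $\Gal(F/K)$-orbits, and close with the Hasse norm theorem for the cyclic extension $M/F$. This is exactly the content of the paper's two lemmas (the equivalence $H^{-1}(G,M^*)=\{1\}\Leftrightarrow H^{-1}(G/H,N_H(M^*))=\{1\}$ for $H$ cyclic, and the analysis of $N_H(M^*)[N_{G/H}]$), just reorganized without naming the module $N_H(M^*)$.

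However, the proof is not complete: the step you yourself call ``the main obstacle'' --- producing $c\in K^*$ whose valuations lie in prescribed classes modulo $f_v$, i.e.\ showing that the relevant ideal class lies in the subgroup $S\subset\Cl(K)$ generated by the classes $[\id{p}_v^{f_v}]$ --- is precisely where the whole content of the theorem sits, and you only gesture at it (``I expect this class-group computation to carry the real content of the proof''). The paper resolves it by proving the stronger statement that $S=\Cl(K)$, \emph{for a suitable choice of the cyclic subgroup}: writing $G\simeq\Cl(K)/G'$ via the Artin map, it chooses (by Chebotarev) primes whose classes generate $G'$ (these split completely in $M$, so $f_v=1$ for them), together with primes $\id{p},\id{q}$ of Frobenius $(1,1)$ and $(0,1)$ in $G\simeq\ZZ/p^\alpha\ZZ\times\ZZ/p^\beta\ZZ$, $\alpha\leq\beta$; taking $H=\ideng{(1,0)}$, all these generating primes have decomposition groups meeting $H$ trivially, hence again $f_v=1$, so $S$ contains a full generating set of $\Cl(K)$. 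Note the trap hidden in your unconstrained ``write $G=\langle\sigma\rangle\times\langle\tau\rangle$'': the conclusion $S=\Cl(K)$ can \emph{fail} if $\langle\tau\rangle$ is the factor of larger order. For instance, if $G\simeq\ZZ/p\ZZ\times\ZZ/p^2\ZZ$ and $\langle\tau\rangle$ is the factor of order $p^2$, then every $g\in G$ with $\ideng{g}\cap\langle\tau\rangle=\{1\}$ has second coordinate divisible by $p$, and one checks that $S$ lands inside the Artin preimage of $\ZZ/p\ZZ\times p\ZZ/p^2\ZZ$, a subgroup of index $p$ in $\Cl(K)$; your desired class-group statement then cannot be obtained by showing $S$ is everything, and would require a genuinely different argument depending on the specific valuations of $w$. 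So the careful choice of $H$ (the factor of \emph{smaller} order) and of the prime generators, made possible by the two-generator hypothesis, is where the theorem's hypotheses actually bite; without carrying out this step, what you have is a correct reduction, not a proof.
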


\begin{proof}
Since~$M/K$ is an abelian unramified extension, there exists~$G'$ a subgroup of~$\Cl(K)$ such
that~$G \simeq \Cl(K) / G'$. Let~$\id{p}_1, \ldots, \id{p}_r$ be primes of~$K$ whose classes
generate~$G'$.
 If~$G \simeq \ZZ/p^\alpha\ZZ \times \ZZ/p^\beta\ZZ$ with~$\alpha \leq \beta$, we complete these
primes
by choosing~$\id{p}, \id{q}$ primes of~$K$ such
that their decomposition groups in~$M/K$
satisfy~$D(\id{p}) = \ideng{(1,1)}$ and~$D(\id{q}) = \ideng{(0,1)}$.
Adjoining~$\id{p}, \id{q}$ to the~$\id{p}_i$'s leads to a system of generators of~$\Cl(K)$.

Let~$H = \ideng{(1,0)}$. Then~$H$ and~$G/H$ are cyclic and, by construction, the decomposition
groups in~$M/K$ satisfy:
$$
\forall 1 \leq i \leq r, \quad D(\id{p}_i) \cap H = \{\text{id}\},
\qquad
D(\id{p}) \cap H = \{\text{id}\},
\qquad
D(\id{q}) \cap H = \{\text{id}\}.
$$
Theorem~\ref{H-1_M_d_p_egal_2} is implied by the two following lemmas.
\end{proof}

\begin{lem}
Let~$H$ be a normal cyclic subgroup of~$G$. Then:
$$
H^{-1}(G, M^*) = \{1\}
\Longleftrightarrow
H^{-1}(G/H, N_H(M^*)) = \{1\}.
$$
\end{lem}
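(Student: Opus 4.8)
The plan is to push the whole problem down to $H$ by means of the factorization of the norm. Writing $Q = G/H$ and $B = N_H(M^*)$, I first record that $B$ is a $Q$-module: since $H$ is normal one has $g\,N_H(x) = N_H(g(x))$ for every $g \in G$, so $B$ is stable under $G$, and because $B \subseteq (M^*)^H$ the $G$-action on $B$ factors through $Q$. By its very definition the norm $N_H \colon M^* \to B$ is surjective, and the identity just quoted says it is $G$-equivariant for the $Q$-action on $B$. The basic computation underlying everything is the decomposition of $G$ into $H$-cosets, which yields $N_G = N_Q \circ N_H$ on $M^*$.

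Next I would check that $N_H$ induces a homomorphism $\psi \colon H^{-1}(G, M^*) \to H^{-1}(Q, B)$. This needs two bookkeeping verifications: that $N_H$ sends $M^*[N_G]$ into $B[N_Q]$, which is immediate from $N_Q(N_H(x)) = N_G(x)$; and that it sends $I_G M^*$ into $I_Q B$, which follows from $N_H(g(y)/y) = \bar g(N_H(y))/N_H(y)$, where $\bar g$ denotes the image of $g$ in $Q$. Surjectivity of $\psi$ then comes for free: any $\beta \in B[N_Q]$ is $\beta = N_H(x)$ for some $x$, and $N_G(x) = N_Q(\beta) = 1$ shows that $x \in M^*[N_G]$ maps to the class of $\beta$. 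In particular the implication $H^{-1}(G, M^*) = \{1\} \Rightarrow H^{-1}(Q, B) = \{1\}$ drops out of surjectivity alone.

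For the converse I would argue directly. Assume $H^{-1}(Q, B) = \{1\}$, i.e. $B[N_Q] = I_Q B$, and take $x \in M^*[N_G]$. Then $N_H(x) \in B[N_Q] = I_Q B$, so lifting each generator $\bar g(N_H(y))/N_H(y)$ of $I_Q B$ back to $g(y)/y$ produces a $z \in I_G M^*$ with $N_H(z) = N_H(x)$; hence $x/z \in M^*[N_H]$. This is exactly the point where the hypothesis on $H$ is used: since $H$ is cyclic, Hilbert~90 together with the period-two periodicity of cyclic Tate cohomology gives $H^{-1}(H, M^*) \cong H^1(H, M^*) = \{1\}$, that is $M^*[N_H] = I_H M^* \subseteq I_G M^*$. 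Therefore $x = z \cdot (x/z) \in I_G M^*$, proving $H^{-1}(G, M^*) = \{1\}$. The one genuinely non-formal ingredient is thus the choice of $B = N_H(M^*)$ (rather than $(M^*)^H$) as the quotient-level module, which is what makes $N_H$ surjective and lets the vanishing of $H^{-1}(H, M^*)$ control the kernel of $\psi$; everything else is manipulation of augmentation ideals.
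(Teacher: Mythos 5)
Your proof is correct and follows essentially the same route as the paper's: the factorization $N_G = N_{G/H}\circ N_H$, the surjectivity of $N_H$ onto $N_H(M^*)$ to get the forward implication, and Hilbert~90 for the cyclic subgroup $H$ (i.e.\ $M^*[N_H]=I_H M^*\subseteq I_G M^*$) to close the converse. Packaging the argument as a surjective induced map $\psi$ on $H^{-1}$ is a tidy reformulation, but the underlying computations are identical to those in the paper.
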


\begin{proof}
Suppose that~$H^{-1}(G, M^*) = \{1\}$. If~$y \in N_H(M^*)[N_{G/H}]$, then there
exists~$z \in M^*$ such that~$y = N_H(z)$ and~$N_G(z) = N_{G/H}(N_H(z)) = N_{G/H}(y) = 1$.
Thus, by hypothesis,~$z \in M^*[N_G] = I_G M^*$:
$$
\exists z_i \in M, \; g_i \in G, \quad
z = \frac{g_1(z_1)}{z_1} \times\cdots\times \frac{g_r(z_r)}{z_r}.
$$
Hence:
$$
y = N_H(z) = \frac{g_1(N_H(z_1))}{N_H(z_1)} \times\cdots\times \frac{g_r(N_H(z_r))}{N_H(z_r)}.
$$
Therefore~$y \in I_{G/H} N_H(M^*)$.

Conversely, suppose that~$H^{-1}(G/H, N_H(M^*)) = \{1\}$. If~$z \in M^*[N_G]$
then~$1 = N_G(z) = N_{G/H}(N_H(z))$ and thus~$N_H(z) \in N_H(M^*)[N_{G/H}]$.
By hypothesis, there
exist~$z_1, \ldots, z_r \in M^*$ and~$g_1, \ldots g_r \in G$ such that:
$$
N_H(z)
=
\frac{g_1(N_H(z_1))}{N_H(z_1)} \times \cdots \times \frac{g_r(N_H(z_r))}{N_H(z_r)}
=
N_H\left(\frac{g_1(z_1)}{z_1} \times \cdots \times \frac{g_r(z_r)}{z_r}\right).
$$
It follows that:
$$
z \in I_G M^* \times M^*[N_H] = I_G M^* \times I_H M^* = I_G M^*,
$$
because,~$H$ being cyclic, one has~$M^*[N_H] = I_H M^*$.
\end{proof}

\begin{lem}
Let~$H$ be a cyclic subgroup of~$G$ such that~$G/H$ is also cyclic.
If~$\Cl(K)$ can be generated by primes whose decomposition groups intersect~$H$ trivially,
then~$H^{-1}(G/H, N_H(M^*)) = \{1\}$.
\end{lem}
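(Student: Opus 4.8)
The plan is to pass to the fixed field $F = M^H$, so that $M/F$ is cyclic unramified with group $H$, while $F/K$ is cyclic with group $G/H$; fix a generator $\sigma$ of $G/H$. The whole argument rests on describing the subgroup $N_H(M^*) \subseteq F^*$ by purely local conditions and then realizing an obstruction class by an element of $K^*$, and it is here that both hypotheses (cyclicity of $H$ and the generation of $\Cl(K)$ by primes avoiding $H$) enter decisively.

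First I would reduce to a statement about $F^*$ via Hilbert 90. Take $y \in N_H(M^*)[N_{G/H}]$. Since $F/K$ is cyclic, Hilbert 90 gives $H^{-1}(G/H, F^*) = 1$, so $F^*[N_{G/H}] = I_{G/H}F^* = \{\sigma(t)/t : t \in F^*\}$; hence $y = \sigma(t)/t$ for some $t \in F^*$. Because such a $t$ need not be a norm from $M$, the goal becomes to find $c \in K^*$ with $tc^{-1} \in N_H(M^*)$: as $c$ is fixed by $\sigma$, the element $s = tc^{-1}$ then satisfies $y = \sigma(s)/s$ with $s \in N_H(M^*)$, i.e.\ $y \in I_{G/H}N_H(M^*)$, which is exactly what is to be shown.

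Next I would characterise $N_H(M^*)$ inside $F^*$. For a finite place $v$ of $F$ above $u$ of $K$, set $f_v = |D(u) \cap H|$, the order of the decomposition group of $M/F$ at $v$. Since $M/F$ is unramified, an element of $F_v^*$ is a local norm exactly when its valuation is divisible by $f_v$, and—the extension being unramified at infinity—every element is a local norm at the archimedean places. As $H$ is cyclic, the Hasse Norm Theorem applies to $M/F$, so
$$
t' \in N_H(M^*) \iff v(t') \equiv 0 \pmod{f_v} \text{ for every finite place } v.
$$
Introducing $\theta : F^* \to \bigoplus_v \ZZ/f_v\ZZ$, $t' \mapsto (v(t') \bmod f_v)_v$, this reads $\ker\theta = N_H(M^*)$, and the task is now to prove $\theta(t) \in \theta(K^*)$.

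Finally I would exploit $y = \sigma(t)/t \in \ker\theta$: this forces $\theta(\sigma t) = \theta(t)$, and since $v(\sigma t) = (\sigma^{-1}v)(t)$ while $f_v$ depends only on $u$, the family $\theta(t)$ is $G/H$-invariant, hence constant on each $G/H$-orbit of places, say $\theta(t)_v = n_u \bmod f_v$ with $n_u$ depending only on $u = v|_K$. Crucially $f_v = 1$ whenever $D(u) \cap H = \{\text{id}\}$, so $\theta(t)$ is supported on the finitely many primes $u$ with $D(u) \cap H \neq \{\text{id}\}$. Form $D_0 = \sum_u n_u[u] \in \Div(K)$ over this support. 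By hypothesis $\Cl(K)$ is generated by primes whose decomposition groups meet $H$ trivially—that is, primes with $f = 1$—so there exist such primes $\id{l}_j$ and integers $m_j$ with $D_0 + \sum_j m_j \id{l}_j = \ideng{c}_K$ principal for some $c \in K^*$. Since the $\id{l}_j$ have $f = 1$ and lie off the support of $D_0$, a place-by-place comparison yields $\theta(c) = \theta(t)$, whence $tc^{-1} \in \ker\theta = N_H(M^*)$ and the lemma follows. I expect this last step to be the main obstacle: it is precisely a class-group realisation problem, solvable only because the generating primes carry $f = 1$ and can therefore be used to trivialise the class of $D_0$ without creating any new local norm obstruction, while cyclicity of $H$ is what legitimises the Hasse Norm Theorem underlying the entire local description.
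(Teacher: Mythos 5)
Your proof is correct and follows essentially the same route as the paper's: Hilbert~90 for the cyclic quotient $G/H$, the local characterization of norms in the unramified cyclic extension $M/F$ combined with the Hasse norm theorem, descent of the resulting obstruction divisor to $K$, and its trivialization using the class-group generators with $f=1$. Your map $\theta$ and the correction by $c \in K^*$ are just a repackaging of the paper's ideals $J$, $I$ and the element $a \in K$.
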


\begin{proof}
Let~$h$ be a generator of~$H$ and~$g \in G$ such that~$G = \ideng{g,h}$. Let~$L = M^H$ so
that~$\Gal(L/K) = \ideng{g}$.

Let~$y \in N_H(M^*)[N_{G/H}]$. Since~$G/H$ is cyclic generated by~$g$, there exists~$b \in L$
such that~$y = \frac{g(b)}{b}$.

Since~$y \in N_H(M^*)$, it is a norm everywhere locally:
\begin{align*}
\forall w \in \Sigma_L, \; w(y) \equiv 0 \pmod{f_w}
&\quad \Longrightarrow \quad
\forall w \in \Sigma_L, \; w \circ g(b) \equiv w(b) \pmod{f_w} \\
&\quad \Longrightarrow \quad
\forall v \in \Sigma_K, \; \forall w,w' \in \Sigma_{L,v}, \;
w'(b) \equiv w(b) \pmod{f_w}.
\end{align*}
Note that there is no condition at infinity because infinite places are supposed unramified.
The last assertion implies that the ideal~$J$ of~$L$ defined by:
$$
J = \prod_{w \in \Sigma_L} \id{p}_w^{-w(b) \bmod{f_w}}
\qquad \text{(for~$x \in \ZZ$, we choose~$x \bmod f_w \in [0..f_w - 1]$)},
$$
is the extension to~$L$ of the ideal~$I$ of~$K$ defined by:
$$
I = \prod_{v \in \Sigma_K} \id{p}_v^{-w(b) \bmod{f_w}}
\qquad \text{(for each~$v \in \Sigma_K$, we choose~$w$ a place of~$\Sigma_{L,v}$).}
$$

By hypothesis,~$\Cl(K)$ can be generated by prime ideals~$\id{p}_1, \ldots, \id{p}_r$ of~$K$
whose decomposition groups satisfy~$D(\id{p}_i) \cap H = \{\text{id}\}$. This means that all the
primes of~$L$ above the~$\id{p}_i$ totally split in~$M$. There exists~$a \in K$
and~$e_1, \ldots, e_r \in \NN$ such that~$\ideng{a} = I \times \prod_i \id{p}_i^{e_i}$.
By construction, the ideal~$ab$ of~$L$ has support on primes of~$L$ totally split in~$M$.

Recall that, in a cyclic extension, the local-global principle is true form norm equations.
Thus, by this local-global principle, we deduce that~$ab \in N_H(M^*)$. Finally,
because~$a \in K$, we have:
$$
y = \frac{g(b)}{b} = \frac{g(ab)}{ab} \in I_{G/H} N_H(M^*),
$$
which was to be proved.
\end{proof}

Secondly, as in the cyclic case, one can ask if the triviality of the cohomological group with values in~$M^*$ could imply some results about the cohomological group
with values in~$E_M$.

\begin{prop} \label{d_p-H-1_E_M_M_principal}
Let~$K$ be a number field and~$M/K$ an unramified (included at infinity) abelian extension
with Galois group~$G$ a $p$-group of $p$-rank~$d$. If~$M$ is principal,
then~$d_p H^{-1}(G, E_M) = \frac{d(d^2+5)}{6}$.
\end{prop}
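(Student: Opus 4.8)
The plan is to reduce the statement to a purely group-cohomological computation and then to evaluate it for an abelian $p$-group of prescribed $p$-rank. Since $M$ is principal, the isomorphism~(\ref{isom}) applies to $L = M$; specializing it at $q = -2$ gives, exactly as recalled in the introduction,
$$
d_p H^{-1}(G, E_M)
=
d_p H^3(G, \ZZ/p\ZZ) - d_p H^2(G, \ZZ/p\ZZ) + d_p H^1(G, \ZZ/p\ZZ).
$$
(Equivalently, $H^{-1}(G,E_M) \simeq H^{-4}(G,\ZZ) \simeq H_3(G,\ZZ)$, and the right-hand side is just the count of $d_p H_3(G,\ZZ)$.) Everything therefore comes down to computing the three mod-$p$ dimensions $d_p H^n(G, \ZZ/p\ZZ) = \dim_{\FF_p} H^n(G, \FF_p)$ for $n = 1, 2, 3$, where $G$ is an abelian $p$-group of $p$-rank $d$.

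Next I would compute these dimensions by passing to the field $\FF_p$, where the answer turns out to be insensitive to the precise type of $G$. Writing $G \simeq \prod_{i=1}^d \ZZ/p^{a_i}\ZZ$, I would first record that for any cyclic $p$-group $C = \ZZ/p^{a}\ZZ$ one has $H^n(C, \FF_p) \simeq \FF_p$ for every $n \geq 0$, so that its Poincar\'e series is $(1-t)^{-1}$; this holds uniformly, whatever the exponent $a$ and whatever the prime $p$. The K\"unneth formula over the field $\FF_p$ then yields $H^*(G, \FF_p) \simeq \bigotimes_{i=1}^d H^*(\ZZ/p^{a_i}\ZZ, \FF_p)$, whose Poincar\'e series is $(1-t)^{-d}$, and hence
$$
\dim_{\FF_p} H^n(G, \FF_p) = \binom{n+d-1}{d-1}.
$$

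Finally I would substitute $n = 1, 2, 3$ and simplify:
$$
d_p H^{-1}(G, E_M)
=
\binom{d+2}{3} - \binom{d+1}{2} + \binom{d}{1}
=
\frac{d(d^2+5)}{6},
$$
which is the claimed value. As a check, this returns $1$ for $d = 1$, agreeing with Corollary~\ref{H-1_unites_cyclic}, and $3$ for $d = 2$, which accounts for the experimental value $d_2 H^{-1}(E_{K^1}) = 3$ announced in the previous section.

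The one point that genuinely needs care --- and from which the uniformity of the final formula springs --- is the assertion that a cyclic $p$-group has one-dimensional mod-$p$ cohomology in every degree. Over $\ZZ$ the cohomology of $\ZZ/p^{a}\ZZ$ depends on $a$, and for $p = 2$ the ring structure differs from the odd case; it is precisely the reduction modulo $p$ that collapses all cyclic $p$-groups to the single Poincar\'e series $(1-t)^{-1}$. I expect the main (albeit modest) obstacle to be verifying this cyclic input cleanly in the two regimes $a = 1$ and $a \geq 2$, and confirming that the K\"unneth formula applies with no correction terms because the coefficients form a field.
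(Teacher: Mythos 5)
Your proposal is correct and follows essentially the same route as the paper: both specialize the isomorphism $H^{q+1}(G,E_M)\simeq H^{q-2}(G,\ZZ)$ at $q=-2$, reduce to the alternating sum $d_p H^3(G,\ZZ/p\ZZ) - d_p H^2(G,\ZZ/p\ZZ) + d_p H^1(G,\ZZ/p\ZZ)$, and evaluate these dimensions by K\"unneth for an abelian $p$-group of rank $d$. Your Poincar\'e-series computation of $\dim_{\FF_p}H^n(G,\FF_p)=\binom{n+d-1}{d-1}$ simply makes explicit what the paper delegates to an exercise in Neukirch--Schmidt--Wingberg, and your intermediate values agree with the paper's.
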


\begin{proof}
In~\cite{Serre_Galois_Cohomology} \S4.4, using class field theory, it is proved that:
$$
\forall q \in \ZZ, \; H^{q+1}(G, E_M) \simeq H^{q-2}(G, \ZZ).
$$
Hence, for~$q=-2$, we obtain:
$$
H^{-1}(G, E_M) \simeq H^{-4}(G, \ZZ).
$$
By duality, it is enough to compute the $p$-rank of~$H^4(G, \ZZ)$. This can be done, starting
with the exact sequence of~$G$-modules
(trivial action)~$0 \to \ZZ \overset{p}{\to} \ZZ \to \ZZ/p\ZZ \to 0$ and considering the long
cohomology exact sequence:
\begin{align*}
0 \to H^1(G, \ZZ/p\ZZ) \to &H^2(G, \ZZ/p\ZZ) \overset{p}{\to} H^2(G, \ZZ/p\ZZ)
\to H^2(G, \ZZ/p\ZZ) \to \\
&H^3(G, \ZZ/p\ZZ) \overset{p}{\to} H^3(G, \ZZ/p\ZZ)
\to H^3(G, \ZZ/p\ZZ) \to H^4(G, \ZZ)[p] \to 0.
\end{align*}
The logarithm of the product of the orders of these groups equals~$0$, therefore:
$$
d_p H^4(G, \ZZ)
=
d_p H^3(G, \ZZ/p\ZZ) - d_p H^2(G, \ZZ/p\ZZ) + d_p H^1(G, \ZZ/p\ZZ)
$$
(recall that in a finite abelian $p$-group~$A$, one has:~$\# A[p] = p^{d_p A}$).
It is now easy to conclude because:
$$
d_p H^2(G, \ZZ/p\ZZ) = \frac{d(d+1)}{2}
\qquad\text{and}\qquad
d_p H^3(G, \ZZ/p\ZZ) = \frac{d(d+1)(d+2)}{6}
$$
as it can be proved using Künneth's formula
(see~\cite{Neukirch_Schmidt_Wingberg}, exercice~7, page~96).
\end{proof}

\begin{rk}
The isomorphism of the beginning of this proof specialized to~$q = -1$ is a key step
of the proof of the Golod-Shafarevich's theorem.
\end{rk}

Let us return to the case where~$d_p(G) = 2$. Then, due to
proposition~\ref{d_p-H-1_E_M_M_principal}, one has~$d_p(G, E_M) = 3$.
As in corollary~\ref{H-1_unites_cyclic}, one can be more precise and
exhibit a basis of~$H^{-1}(G, E_M)$.

\begin{theo}
Let~$K$ be a number field and~$M/K$ an unramified (included at infinity) abelian extension
with Galois group~$G$ a $p$-group of rank~$2$ such that~$M$ is principal.
If~$G = \ideng{g_1, g_2}$ and if~$\pi_1, \pi_2, \pi_{12}$ generate primes ideals
of~$M$ with Frobenius equal to~$g_1, g_2$ and~$g_1g_2$ respectively, then:
$$
H^{-1}(G, E_M)
=
\ideng{\frac{g_1(\pi_1)}{\pi_1},\frac{g_2(\pi_2)}{\pi_2},\frac{g_1g_2(\pi_{12})}{\pi_{12}}}.
$$
\end{theo}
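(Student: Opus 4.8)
The plan is to realize the three proposed units as images of Frobenius elements under a connecting homomorphism, and then to invoke the rank computation of Proposition~\ref{d_p-H-1_E_M_M_principal} to see that three of them already suffice. First I would record that each element genuinely defines a class. Since $g_1$ is the Frobenius of the prime $\id{P}_1=\ideng{\pi_1}_M$, it lies in the decomposition group $D(\id{P}_1)$ and hence fixes $\id{P}_1$; consequently $g_1(\pi_1)$ and $\pi_1$ generate the same ideal and $u_1:=\frac{g_1(\pi_1)}{\pi_1}$ is a unit, and likewise for $u_2$ and for $u_{12}$ (whose prime has Frobenius $g_1g_2$). As each $u$ has the shape $\frac{g(x)}{x}$ with $x\in M^*$, it lies in $I_GM^*\subset M^*[N_G]$, so $N_G(u)=1$; being a unit, it lies in $E_M[N_G]$ and defines a class in $H^{-1}(G,E_M)$. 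Observe, however, that $u$ belongs a priori only to $I_GM^*$, not to $I_GE_M$, so its class need not be trivial; this is exactly the point.

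By Proposition~\ref{d_p-H-1_E_M_M_principal} one has $d_pH^{-1}(G,E_M)=3$. Because a finite abelian $p$-group is generated by any family whose image in $A/pA$ is a basis, and since here $\dim_{\FF_p}A/pA=3$ matches the number of elements, it is enough to prove that $[u_1],[u_2],[u_{12}]$ are $\FF_p$-independent modulo $pH^{-1}(G,E_M)$. To get hold of these classes I would use that $M$ is principal, so the divisor map gives a short exact sequence of $G$-modules $1\to E_M\to M^*\to\Div(M)\to1$. Feeding it into the Tate long exact sequence and using Theorem~\ref{H-1_M_d_p_egal_2}, which gives $H^{-1}(G,M^*)=1$, produces a surjection $\delta:H^{-2}(G,\Div M)\twoheadrightarrow H^{-1}(G,E_M)$. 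Decomposing $\Div(M)=\bigoplus_{v}\operatorname{Ind}_{D(v)}^{G}\ZZ$ over the finite places $v$ of $K$ and applying Shapiro's lemma identifies $H^{-2}(G,\Div M)\simeq\bigoplus_v H^{-2}(D(v),\ZZ)\simeq\bigoplus_v D(v)$, each $D(v)$ being cyclic, generated by the Frobenius $\Fr_v$.

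The heart of the argument is to trace $\delta$ through these identifications and check that it sends the Frobenius generator of $D(v)$ to the class of $\frac{\Fr_v(\pi_v)}{\pi_v}$, where $\pi_v$ generates one of the primes of $M$ above $v$. For this I would restrict the short exact sequence to the cyclic group $D(v)$, compute the connecting map there exactly as in the cyclic case treated in \S\ref{s_cyclic} and in Corollary~\ref{H-1_unites_cyclic}, and then push it forward to $H^{-1}(G,E_M)$ via corestriction, using the compatibility of Shapiro's isomorphism with connecting homomorphisms. Once this is done, $[u_1],[u_2],[u_{12}]$ are precisely the $\delta$-images of the Frobenius generators of the decomposition groups of $\id{P}_1,\id{P}_2,\id{P}_{12}$, and the surjectivity of $\delta$ shows that classes of this very shape span $H^{-1}(G,E_M)$.

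It then remains to reduce from all Frobenius directions to the three chosen ones and to prove independence, and this is where I expect the real difficulty. The subtle point is that $M$ principal forces $M=K^{\mathrm{hilb}}$ and $\Cl(K)\simeq G$ of rank $2$, so the assignment $\sigma\mapsto\bigl[\tfrac{\sigma(\pi)}{\pi}\bigr]$ cannot be additive in $\sigma\in G$: in $G$ one has $g_1g_2=g_1\cdot g_2$, yet the three images must be independent in a group of rank $3$, so the defect $[u_{12}]\,[u_1]^{-1}\,[u_2]^{-1}$ is forced to be nontrivial and to furnish the third basis vector. Controlling this defect, which I expect to be of a quadratic (cup-product) nature and to account exactly for the passage from rank $2$ to rank $3$, is the main obstacle. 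Concretely I would express an arbitrary $\delta$-image in terms of $[u_1],[u_2],[u_{12}]$ using $G=\ideng{g_1,g_2}$ together with the relations coming from the image of $H^{-2}(G,M^*)$, and then check that the resulting $3\times3$ system is unimodular modulo $p$; the count $d_pH^{-1}(G,E_M)=3$ guarantees that no further relation can collapse them, so the three elements form a basis.
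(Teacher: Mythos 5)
Your setup is sound and your first half essentially recovers the paper's first step by a cleaner route: the paper also begins by showing that $H^{-1}(G,E_M)$ is generated by the classes of $\frac{\sigma_\pi(\pi)}{\pi}$ for $\pi$ running over prime elements of $M$, though it does this by hand (writing $u\in E_M[N_G]$ as an element of $I_GM^*$ via Theorem~\ref{H-1_M_d_p_egal_2} and factorizing into primes) rather than via the sequence $1\to E_M\to M^*\to\Div(M)\to 1$, Shapiro's lemma and the connecting map $\delta$. That part of your plan is workable. But the theorem only asserts \emph{generation} by the three listed elements, and the step you defer as ``the main obstacle'' --- reducing the class of $\frac{\sigma_\pi(\pi)}{\pi}$ for an \emph{arbitrary} prime $\pi$ to the subgroup generated by $\frac{g_1(\pi_1)}{\pi_1},\frac{g_2(\pi_2)}{\pi_2},\frac{g_1g_2(\pi_{12})}{\pi_{12}}$ --- is precisely the content of the paper's second step and is left entirely unexecuted in your proposal. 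Saying you would ``check that the resulting $3\times 3$ system is unimodular modulo $p$'' is not an argument: the classes $[u_1],[u_2],[u_{12}]$ need not be independent a priori, and the rank count $d_pH^{-1}(G,E_M)=3$ cannot by itself convert three elements into a generating set unless you \emph{prove} their independence mod $p$, which you do not.

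Concretely, what is missing is the following chain, which the paper supplies: set $H=\ideng{g_1g_2}$, $L=M^H$; write $\sigma_\pi=g_1^{\alpha_1}g_2^{\alpha_2}$ and use the Artin map to get $\id{p}=a\,\id{p}_1^{\alpha_1}\id{p}_2^{\alpha_2}$ with $a\in K^*$; use that $\id{p}_1,\id{p}_2$ split totally in $M/L$ to compare $N_H(\pi)$ with $N_H(\pi_1)^{\alpha_1}N_H(\pi_2)^{\alpha_2}$ up to a unit $v\in E_L$; invoke the Hasse norm theorem in the cyclic unramified extension $M/L$ to write $v=N_H(w)$; and finally apply the cyclic case (Corollary~\ref{H-1_unites_cyclic}) to $M/L$ to identify $E_M[N_H]=I_HE_M\,\ideng{\frac{g_1g_2(\pi_{12})}{\pi_{12}}}$, which is exactly where the third generator enters and where your ``quadratic defect'' is actually controlled. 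None of these ingredients (the intermediate field, Artin reciprocity, the local--global principle for norms, the cyclic-case description of $E_M[N_H]$) appear in your proposal, so the argument as written does not close.
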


\begin{proof}
{\bfseries First step.} We claim that~$H^{-1}(G, E_M)$ is generated by:
$$
H^{-1}(G, E_M)
=
\ideng{\frac{\sigma_{\pi}(\pi)}{\pi}, \; \text{$\pi$ a prime element of~$M$}}.
$$
where~$\sigma_\pi$ denotes the Frobenius at~$\pi$.

Let~$\pi$ be a prime element of~$M$ and~$g, g' \in G$ such
that~$g \equiv g' \bmod{D(\pi)}$ where~$D(\pi)$ denotes the decomposition group of the
ideal~$\ideng{\pi}_M$. Then there exists~$\alpha \in \NN$ such
that~$g^{-1}g' = \sigma_\pi^\alpha$ and thus:
$$
\frac{g'(\pi)}{g(\pi)}
=
g\left(\frac{g^{-1}g'(\pi)}{\pi}\right)
=
g\left(\frac{\sigma_\pi^\alpha(\pi)}{\pi}\right)
\equiv
\frac{\sigma_\pi^\alpha(\pi)}{\pi}
\equiv
\left(\frac{\sigma_\pi(\pi)}{\pi}\right)^\alpha \pmod{I_G E_M}.
$$

For every~$v \in \Sigma_K$, we choose a generator~$\pi_v$ of one of the primes of~$M$
above~$\id{p}_v$ and we fix a section~$\sigma \mapsto \widetilde{\sigma}$ of the cononical
projection map~$G \to G/D(\pi_v)$. The elements~$\widetilde{\sigma}(\pi_v)$,
when~$v$ runs in~$\Sigma_K$ and~$\sigma \in G/D(v)$, describe a system of prime elements of~$M$.
Then every~$z \in M$ factorizes into: 
$$
z = u \prod_{v \in \Sigma_K} \left(\prod_{\sigma \in G/D(v)} \widetilde{\sigma}(\pi_v)^{e_{v,\sigma}}\right)
\qquad\Longrightarrow\qquad
g(z) = g(u) \prod_{v \in \Sigma_K} \left(\prod_{\sigma \in G/D(v)} g\widetilde{\sigma}(\pi_v)^{e_{v,\sigma}}\right)
$$
for every~$g \in G$. Of course~$g\widetilde{\sigma} \equiv \widetilde{g\sigma} \bmod{D(\pi_v)}$
therefore there exists~$\alpha_{v,\sigma} \in \NN$ such that:
\begin{align*}
g\widetilde{\sigma}(\pi_v) &= \left(\frac{\sigma_v(\pi_v)}{\pi_v}\right)^{\alpha_{v,\sigma}}
\widetilde{g\sigma}(\pi_v)\\
&\Longrightarrow\qquad
g(z) \in \ideng{g(u)}\ideng{\frac{\sigma_{\pi}(\pi)}{\pi}, \; \text{$\pi$ a prime element of~$M$}}\ideng{\widetilde{\sigma}(\pi_v), \, v \in \Sigma_K, \sigma \in G/D(v)}.
\end{align*}

Now start with~$u \in E_M[N_G]$. By theorem~\ref{H-1_M_d_p_egal_2}, we know
that~$H^{-1}(G, M^*) = \{1\}$, i.e.~$M^*[N_G] = I_G M^*$.
Hence, there exists~$z_1, z_2 \in M^*$ such
that~$u = \frac{\sigma_1(z_1)}{z_1}\frac{\sigma_1(z_2)}{z_2}$.
Factorizing~$z_1$ and~$z_2$ into primes of~$M$ of the form~$\widetilde{\sigma}(\pi_v)$, one shows
that:
$$
u \in I_G E_M \ideng{\frac{\sigma_{\pi}(\pi)}{\pi}, \; \text{$\pi$ a prime element of~$M$}}\ideng{\widetilde{\sigma}(\pi_v), \, v \in \Sigma_K, \sigma \in G/D(v)};
$$
But, in this decomposition, since~$u$ is invertible, the element in the third group must be
equal to~$1$.

\medbreak

{\bfseries Second step.} We consider a prime element~$\pi$ of~$M$ whose Frobenius is denoted
by~$\sigma_\pi$. Let us prove that the class modulo~$I_G E_M$ of the
element~$u = \frac{\sigma_\pi(\pi)}{\pi}$ is contained in the subgroup generated by
the~$\frac{g_i(\pi_i)}{\pi_i}$ for~$i = 1,2,12$.

To this end, put~$H = \ideng{g_{12}}$,~$L = M^H$ and~$\id{p} = \ideng{\pi}_M \cap K$,
$\id{p}_1 = \ideng{\pi_1}_M \cap K$, $\id{p}_2 = \ideng{\pi_2}_M \cap K$.

There exits~$\alpha_1, \alpha_2 \in \NN$
such that~$\sigma_\pi = g_1^{\alpha_1}g_2^{\alpha_2}$ and,
by Artin map,~$\id{p} = a \id{p}_1^{\alpha_1}\id{p}_2^{\alpha_2}$ with~$a \in K^*$.
Since~$\ideng{\sigma_i} \cap H = \{\Id\}$ for~$i = 1,2$, the primes~$\id{p}_i$,
$i=1,2$, totally split between~$L$ and~$M$. Thus:
$$
\begin{cases}
e_{L/K}(\id{p}) = \ideng{N_H(\pi)}_L \\
e_{L/K}(\id{p_i}) = \ideng{N_H(\pi_i)}_L, \; i=1,2
\end{cases}
\qquad\Longrightarrow\qquad
N_H(\pi) = a v N_H(\pi_1)^{\alpha_1}N_H(\pi_2)^{\alpha_2},
$$
where~$v \in E_L$. Hence:
$$
N_H(u)
=
N_H\left(\frac{\sigma_\pi(\pi)}{\pi}\right)
=
\frac{\sigma_\pi\left(N_H(\pi)\right)}{N_H(\pi)}
=
\frac{\sigma_\pi(a)}{a}\frac{\sigma_\pi(v)}{v}
N_H\left(\frac{\sigma_\pi(\pi_1)}{\pi_1}\right)^{\alpha_1}
N_H\left(\frac{\sigma_\pi(\pi_2)}{\pi_2}\right)^{\alpha_2}.
$$
Let us look separately, at the four terms in the right hand product. The first one is equal to~$1$ because~$a \in K$.
Since local-global principal occurs in cyclic extensions and since~$M/L$ is unramified,
there exists~$w \in E_M$ such that~$v = N_H(w)$. Thus the second
term~$\frac{\sigma_\pi(v)}{v}$ equals~$N_H\left(\frac{\sigma_\pi(w)}{w}\right)$.
The thirst and fourth terms go in the same way:
since~$g_1, g_2$ generate~$G$, the elements~$g_1$ and~$g_1g_2$ also generate~$G$ and there
exists~$\beta_1, \beta_2 \in \NN$ such that~$\sigma_\pi = g_1^{\beta_1}(g_1g_2)^{\beta_2}$.
It follow that:
$$
N_H\left(\frac{\sigma_\pi(\pi_1)}{\pi_1}\right)
=
N_H\left(\frac{g_1^{\beta_1}(\pi_1)}{\pi_1}\right)
=
N_H\left(\frac{g_1(w_1)}{w_1}\left(\frac{g_1(\pi_1)}{\pi_1}\right)^{\beta_1}\right)
$$
where~$w_1 \in E_M$.

In conclusion, going back to~$u$, it satisfies:
\begin{align*}
N_H(u)
&=
N_H\left(
\frac{\sigma_\pi(w)}{w}
\frac{g_1(w_1)}{w_1}^{\alpha_1}
\frac{g_2(w_2)}{w_2}^{\alpha_1}
\left(\frac{g_1(\pi_1)}{\pi_1}\right)^{\alpha_1\beta_1}
\left(\frac{g_2(\pi_2)}{\pi_2}\right)^{\alpha_2\beta_2}
\right) \\
&\Longrightarrow
u \times
\left(
\frac{\sigma_\pi(w)}{w}
\frac{g_1(w_1)}{w_1}^{\alpha_1}
\frac{g_2(w_2)}{w_2}^{\alpha_2}
\left(\frac{g_1(\pi_1)}{\pi_1}\right)^{\alpha_1\beta_1}
\left(\frac{g_2(\pi_2)}{\pi_2}\right)^{\alpha_2\beta_2}
\right)^{-1} \in E_M^*[N_H].
\end{align*}
Finally, due to the cyclic case, we know
that~$E_M^*[N_H] = I_H E_M \ideng{\frac{g_1g_2(\pi_{12})}{\pi_{12}}}$
and thus:
$$
u \bmod{I_G E_M} \in
\ideng{\frac{g_1(\pi_1)}{\pi_1},\frac{g_2(\pi_2)}{\pi_2},\frac{g_1g_2(\pi_{12})}{\pi_{12}}},
$$
which was to be proved.
\end{proof}

\begin{rk}
All these results hold in the function field case for $S$-units where~$S$ is any non-empty
finite set of places.
\end{rk}


\end{document}